\theoremstyle{definition}
\newtheorem{defi}{Definition}
\theoremstyle{plain}
\newtheorem{thm}[defi]{Theorem}
\newtheorem{lem}[defi]{Lemma}
\newtheorem{cor}[defi]{Corollary}
\begin{document}
\Large
\begin{center}
{\bf Free Cyclic Submodules and Non-Unimodular Vectors}
\end{center}
\large
\vspace*{-0.0cm}
\begin{center}
Joanne L. Hall$^{1,2}$ and Metod Saniga$^{2}$
\end{center}
\vspace*{-.4cm} \normalsize
\begin{center}

$^{1}$School of Mathematical and Geospatial Sciences, RMIT University\\ GPO Box 2476, Melbourne 3001\\ Australia\\
(joanne.hall@rmit.edu.au)

\vspace*{.05cm} and

\vspace*{.05cm}

$^{2}$Astronomical Institute, Slovak Academy of Sciences\\
SK-05960 Tatransk\' a Lomnica\\ Slovak Republic\\
(msaniga@astro.sk)

\end{center}

\vspace*{-.2cm} \noindent \hrulefill

\vspace*{.1cm} \noindent {\bf Abstract}

\noindent Given a finite associative ring with unity, $R$, and its two-dimensional left module, $^{2}\!R$, the following two problems are addressed: 1) the existence of vectors of  $^{2}\!R$ that do not belong to any free cyclic submodule (FCS) generated by a unimodular vector and 2) conditions under which such (non-unimodular) vectors generate FCSs. The main result is that for a non-unimodular vector to generate an FCS of $^2\!R$, $R$ must have at least two maximal right ideals of which at least one is non-principal.
\\ \\
{\bf Keywords:}  Finite Unital Rings -- Free Cyclic Submodules -- Non-Unimodular Vectors

\vspace*{-.1cm} \noindent \hrulefill

\vspace*{.3cm}
\section{Introduction}
Projective geometries over finite associative rings with unity have recently found important applications in coding theory (see, e.\,g., \cite{HL09}) and quantum information theory (see, e.\,g., \cite{PSK06, HS08, SPP08}).
When constructing a geometry over a ring, a majority of authors consider as points of such a geometry only free cyclic submodules (FCSs) generated by  unimodular vectors \cite{veld95}, whilst some authors  consider
all cyclic  submodules \cite{BGS95, Faure04}. It has recently been shown \cite{HS09} that there exists rings for which some vectors  of the submodule are not  contained in an FCS generated by a unimodular vector.  These vectors have been called outliers.  Even more interesting is that some outliers themselves generate FCSs. A geometry may be constructed using all FCSs.

Analysing all finite associative rings with unity up to order 31 inclusive, only several rings are found to feature outliers. Out of these, only few non-commutative rings exhibit FCSs comprising solely non-unimodular vectors \cite{San11}; the smallest example being the ring of ternions over the Galois field of order two \cite{HS09}. These examples motivated  a more systematic and general treatment of the questions of the existence of outliers and FCSs generated by them. The outcomes of our explorations are not only interesting on their own, but they can also have interesting physical bearings (like, e.\,g., those proposed in \cite{SP10}).

\section{Definitions and Preliminaries \label{defi}}
All rings considered are finite, associative and with unity (multiplicative identity). It is well known that in such a ring, $R$, an element is either a unit or a (two-sided) zero-divisor (see, for example, \cite[\S 2.1]{Rag69}); in what follows the group of units of $R$ is denoted by $R^*$ and the set of zero divisors by $R\setminus R^*$.  $1$ is the unity element of $R$ and the symbol $\subset $ stands for strict inclusion.

\begin{defi}Let $\langle R, \cdot, +\rangle $ be a ring.  A \emph{left ideal}, $I_l$, is a subgroup of $\langle R,+\rangle$ such that $rx\in I_l$ for all $r\in R$ and $x\in I_l$.  A \emph{right ideal}, $I_r$, is a subgroup of $\langle R,+\rangle$ such that $xr\in I_r$ for all $x\in I_r$ and $r\in R$.  An ideal is \emph{principal} if it is generated by a single element of $R$. For $a\in R$ the  \emph{principal left ideal} generated by $a$ is  $Ra$, and a \emph{principal right ideal} generated by $a$ is $aR$.
\end{defi}
For further background on rings see, for example, \cite{Lam01}.
\begin{defi}\cite[Defi 2.9]{veld95}\cite[p.\,16]{HP73} Let $S\subseteq R$. The left (right) \emph{annihilator} of $S$, denoted $^\perp\!S$ ($S^\perp$), is defined as:
\begin{eqnarray*}
 ^\perp\!S & = & \{x\in R:xa=0, \forall a\in S\},\\
S^\perp & = & \{x\in R:ax=0, \forall a\in S\}.
\end{eqnarray*}
\end{defi}
For sets containing a single element, the notation is simplified $^\perp\!\{a\}:=\;^\perp\!a$.

\begin{lem}\label{lem:perpinclusion} \label{lem:perp}Let $P,S\subseteq R$, then $^\perp\!S$ is a left ideal, $S^\perp$ is a right ideal and
 \begin{equation*}
  ^\perp\!P\cap\;^\perp\!S=\;^\perp(P\cup S).
 \end{equation*}
\end{lem}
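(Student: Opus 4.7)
The plan is to verify each of the three assertions directly from the definitions; none of the steps should require anything beyond elementary ring manipulations, so the main ``obstacle'' is simply organising the bookkeeping cleanly.

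First I would show that $^\perp\!S$ is a left ideal of $R$. Take $x,y\in {}^\perp\!S$ and $r\in R$. For any $a\in S$, distributivity gives $(x-y)a = xa - ya = 0 - 0 = 0$, so $x-y\in {}^\perp\!S$, which makes $^\perp\!S$ a subgroup of $\langle R,+\rangle$. Associativity then yields $(rx)a = r(xa) = r\cdot 0 = 0$, so $rx\in {}^\perp\!S$, confirming the left-ideal property. The argument for $S^\perp$ is the exact right-left mirror: the subgroup property is the same, and closure under $xr$ instead of $rx$ follows from $a(xr) = (ax)r = 0$.

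For the set-theoretic identity I would argue by double inclusion (or as a single chain of ``iff''s). An element $x\in R$ lies in $^\perp\!P\cap {}^\perp\!S$ precisely when $xa=0$ for every $a\in P$ and $xa=0$ for every $a\in S$, which is exactly the condition $xa=0$ for every $a\in P\cup S$, i.e.\ $x\in {}^\perp(P\cup S)$. This completes the lemma. The only ``care'' needed is to remember that the definition of $^\perp\!S$ uses left multiplication by the annihilator element (hence a left ideal, not a right one), and symmetrically for $S^\perp$; once that is kept straight, the proof reduces to one line per claim.
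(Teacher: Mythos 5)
Your proof is correct: the subgroup and absorption checks for $^\perp\!S$ and $S^\perp$ follow exactly as you write from distributivity and associativity, and the double-inclusion (equivalently, the chain of ``iff''s) for $^\perp\!P\cap{}^\perp\!S={}^\perp(P\cup S)$ is just unwinding the universal quantifier over $P\cup S$. The paper in fact states this lemma without any proof, treating it as immediate from the definitions, so your write-up simply supplies the routine verification the authors omitted; there is nothing to compare beyond noting that your one-line-per-claim argument is the intended one.
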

Definitions below are given for left modules, the mirrored definitions can be given for right modules.

\begin{defi}\cite{veld95}
 Let $R$ be a ring with unity, and  $^2\!R$ be a left module over $R$, and let
\begin{equation}
aR+bR=\{ax+by:x,y\in R\}.
\end{equation}
  $(a,b)\in\; ^2\!R$ is  unimodular if $aR+bR=R$.
  \end{defi}
  Note that $aR$ and $bR$ are principal right ideals of $R$.  The following Lemma provides an alternate definition of unimodular.
\begin{lem}\cite{veld95}
 Let $R$ be a ring with unity, and  $^2\!R$ be a left module over $R$. $(a,b)\in\; ^2\!R$ is  unimodular if and only if  there exists $x,y\in R$ such that $ax+by=1$.
\end{lem}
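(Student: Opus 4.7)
The plan is to prove the two directions directly from the definition $aR+bR=\{ax+by : x,y\in R\}$ together with the fact that $R$ has a multiplicative identity. Neither direction seems to require any ring-theoretic machinery beyond the axioms, so there is no real obstacle; the argument is essentially an unpacking of the definition already supplied.

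For the forward implication, I would assume $aR+bR = R$. Since $1\in R$, the equality forces $1 \in aR+bR$, and by the description of $aR+bR$ as the set of elements of the form $ax+by$, this immediately yields the desired $x,y\in R$ with $ax+by=1$.

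For the reverse implication, I would start from the hypothesis that $ax_0+by_0=1$ for some $x_0,y_0\in R$. Then for an arbitrary $r\in R$, right-multiplication gives
\begin{equation*}
r = 1\cdot r = (ax_0+by_0)r = a(x_0 r) + b(y_0 r) \in aR+bR,
\end{equation*}
so $R\subseteq aR+bR$. The reverse inclusion $aR+bR\subseteq R$ is automatic, since $aR$ and $bR$ are subsets of $R$ closed under addition in $R$. Combining the two inclusions gives $aR+bR = R$, which is the definition of $(a,b)$ being unimodular.

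The only point to be a little careful about is the use of associativity in the step $(ax_0)r = a(x_0 r)$ and the fact that $1$ is a two-sided identity (so that $1\cdot r = r$); both are available since $R$ is associative and unital, as stipulated at the start of Section~\ref{defi}. No appeal to finiteness, to Lemma~\ref{lem:perp}, or to any property of zero-divisors is needed.
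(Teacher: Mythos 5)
Your proof is correct; both directions are exactly the standard unpacking of the definition, and you rightly identify the only points where the ring axioms are actually used (associativity for $(ax_0)r = a(x_0 r)$ and the two-sided identity for $1\cdot r = r$). The paper gives no proof of this lemma at all --- it is cited directly from Veldkamp --- so there is nothing to compare against, but your argument is the expected one and fills the gap cleanly.
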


\begin{defi} $R(a,b)$ is a \emph{cyclic} subset of $^2\!R$ generated by $(a,b)$:
\begin{equation*}
 R(a,b)=\{(\alpha a,\alpha b):\alpha\in R\}.
\end{equation*}
If $(\alpha a,\alpha b)=(0,0)$ only when $\alpha=0$, then $R(a,b)$ is a \emph{free cyclic submodule}.
\end{defi}
Reworking the definition of a free cyclic submodule using annihilators leads to the obvious lemma:

\begin{lem}
 Let $R$ be a finite associative ring with unity. $R(a,b)$ is a free cyclic submodule of $^2\!R$ if and only if
\begin{equation}\label{eqn:perp}
 ^\perp a\cap\;^\perp b=\{0\}.
\end{equation}
\end{lem}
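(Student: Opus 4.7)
The statement is essentially a reformulation of the definition, so my plan is simply to unpack both sides and observe they say the same thing. I would proceed by chasing an arbitrary element $\alpha \in R$ through the equivalence.

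First I would recall that, by definition, $R(a,b)$ is a free cyclic submodule precisely when the only $\alpha \in R$ satisfying $(\alpha a, \alpha b) = (0,0)$ is $\alpha = 0$. The next step is to note that $(\alpha a, \alpha b) = (0,0)$ in $^2\!R$ means exactly that $\alpha a = 0$ and $\alpha b = 0$ in $R$, which by the definition of the left annihilator is the statement $\alpha \in\ ^\perp a$ and $\alpha \in\ ^\perp b$, i.e.\ $\alpha \in\ ^\perp a \cap\ ^\perp b$.

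Putting these together, the condition that no nonzero $\alpha$ satisfies $(\alpha a, \alpha b) = (0,0)$ becomes the condition that $^\perp a \cap\ ^\perp b$ contains no nonzero element. Since $^\perp a \cap\ ^\perp b$ is a left ideal (being the intersection of two left ideals, by Lemma~\ref{lem:perp}) and in particular always contains $0$, this is exactly the equation $^\perp a \cap\ ^\perp b = \{0\}$.

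There is no real obstacle here; the claim is labeled ``obvious'' because both sides are literal restatements of ``$\alpha a = 0$ and $\alpha b = 0$ force $\alpha = 0$.'' The only thing to be mildly careful about is to perform the equivalence as a biconditional chain (both directions are immediate from the same calculation), and to invoke the finiteness/associativity hypotheses only insofar as they ensure the ambient module structure; neither is actually used in the argument beyond legitimizing the definitions already set up.
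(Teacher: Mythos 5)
Your proposal is correct and follows the same route as the paper, which likewise just unwinds the definition of freeness ($\alpha(a,b)=(0,0)$ only for $\alpha=0$) and observes it is a restatement of $^\perp a\cap\;^\perp b=\{0\}$; you merely spell out the element-chase that the paper leaves implicit. No issues.
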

\begin{proof}
  Let $(a,b)$ be the generating vector, then by definition $R(a,b)$ is free if  $\alpha(a,b)=(0,0)$ only if $\alpha=0$.  This is equivalent to equation (\ref{eqn:perp}).
\end{proof}

\begin{lem}\cite[\S 1]{veld95}\label{lem:veld}
 Let $(a,b)$ be a unimodular vector in $^2\!R$, then $R(a,b)$ is a free cyclic submodule.
\end{lem}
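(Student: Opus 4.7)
The plan is to invoke the annihilator characterization of free cyclic submodules established in the lemma just above (equation (\ref{eqn:perp})), so the goal reduces to showing that ${}^\perp a \cap {}^\perp b = \{0\}$ whenever $(a,b)$ is unimodular. This reduction is the natural first move because it converts a statement about the module $^2\!R$ into a statement purely inside $R$, where the unimodularity hypothesis directly supplies an algebraic identity to exploit.

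Next, I would unpack the unimodular hypothesis via the alternate characterization (the Lemma preceding this one): there exist $x,y\in R$ with $ax+by=1$. Now take any $\alpha \in {}^\perp a \cap {}^\perp b$. By definition of the left annihilator, $\alpha a = 0$ and $\alpha b = 0$. Multiplying the identity $ax+by=1$ on the left by $\alpha$ and using associativity gives
\begin{equation*}
\alpha = \alpha \cdot 1 = \alpha(ax+by) = (\alpha a)x + (\alpha b)y = 0\cdot x + 0 \cdot y = 0.
\end{equation*}
Hence ${}^\perp a \cap {}^\perp b = \{0\}$, and the previous lemma concludes $R(a,b)$ is a free cyclic submodule.

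There is essentially no obstacle here: the proof is a two-line chase once the right characterization is in place. The only subtlety is paying attention to sides, left versus right. The annihilator ${}^\perp a$ acts on the \emph{left} of $a$, while the unimodularity identity $ax+by=1$ multiplies $a,b$ on the \emph{right}; these match up precisely so that left multiplication of the identity by $\alpha$ is legitimate and produces $\alpha a$ and $\alpha b$ as the factors we can kill. This compatibility of sides is exactly why the "left module / right-ideal" definitions in the preliminaries were set up as they were, and it is what makes the lemma work cleanly without any further hypothesis on $R$.
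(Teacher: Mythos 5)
Your proof is correct and complete: the reduction to the annihilator criterion ${}^\perp a\cap{}^\perp b=\{0\}$ followed by left-multiplying the identity $ax+by=1$ by any common annihilator $\alpha$ is exactly the standard argument, and your attention to the left/right compatibility is well placed. The paper itself offers no proof of this lemma (it is simply cited to Veldkamp), so your two-line argument is a welcome self-contained justification consistent with the machinery the paper sets up.
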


Unimodular vectors have other useful properties \cite{veld95}, and for many rings all free cyclic submodules are generated by unimodular vectors.  However this is not always the case. Corollaries  \ref{cor:local} and \ref{cor:principalideal} show two  classes of rings for which all free cyclic submodules are generated by unimodular vectors. The ring of ternions \cite{HS09} is an example where some free cyclic submodules are generated by non-unimodular vectors.

\begin{defi}
 An \emph{outlier} is a vector which is not contained in any  free cyclic submodule generated by a unimodular vector.
\end{defi}

The aim of this research is to get some insight into which rings contain outliers, and, more specifically, which rings contain outliers that generate free cyclic submodules.  This question is of interest for general $^n\!R$, but we only treat the simplified case of $^2\!R$ where $R$ is a finite associative ring.

\section{Results}
\subsection{Unimodular vectors}
We begin by collecting some important facts about unimodular vectors.

If $a\in R^*$, then $aR=R$, hence for all $b\in R$, $aR+ bR=R$. Thus any vector containing a unit as an entry is a unimodular vector.
Unimodular vectors may be divided into two types:
\begin{itemize}
 \item Type I: vectors which contain at least one entry which is a unit;
 \item Type II: vectors which contain no  entries that are units.
\end{itemize}
\begin{thm}\label{thm:2maxideals}
 Let $R$ be a finite associative ring with unity. If $a,b\in R\setminus R^*$, then  $(a,b)$ is a  unimodular vector in $^2\!R$ if and only if there exist maximal right ideals, $I_1$, $I_2$, such that $a\in I_1\setminus I_2$ and $b\in I_2\setminus I_1$.
\end{thm}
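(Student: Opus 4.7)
The plan is to prove each direction of the biconditional separately. Both rely on the standard fact that in a finite ring every non-unit is contained in at least one maximal right ideal, since the principal right ideal it generates is proper.

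For the $(\Rightarrow)$ direction, start from a representation $ax + by = 1$ witnessing unimodularity. Pick any maximal right ideal $I_1$ containing the non-unit $a$. Then $b \notin I_1$: otherwise both $ax$ and $by$ would lie in the right ideal $I_1$, forcing $1 \in I_1$ and contradicting maximality. Symmetrically, pick a maximal right ideal $I_2$ containing $b$ and deduce $a \notin I_2$. This produces the required pair $(I_1, I_2)$.

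For the $(\Leftarrow)$ direction, I would aim to build $aR + bR = R$ directly from the hypothesis. Note first that $I_1 \neq I_2$ (witnessed by $a \in I_1 \setminus I_2$), so by maximality $I_1 + I_2 = R$. Next, $a \notin I_2$ together with maximality of $I_2$ gives $aR + I_2 = R$, so there exist $x \in R$ and $i \in I_2$ with $ax + i = 1$; symmetrically, $by + j = 1$ for some $y \in R$ and $j \in I_1$. The task is then to combine these two representations to express $1$ as an element of $aR + bR$.

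The main obstacle is exactly this combining step. A natural attempt is to multiply the two identities, $1 = (ax + i)(by + j) = axby + axj + iby + ij$: the first two summands already lie in $aR$, but the residual terms $iby$ and $ij$ need to be absorbed into $aR + bR$. Doing so in the non-commutative setting requires either iterating the rewrite using finiteness of $R$, or invoking deeper structural input such as the semisimplicity of $R/J(R)$ and the comaximality $I_1 + I_2 = R$ to reduce modulo $I_1 \cap I_2$. I expect this reduction to be the technical crux of the argument, and to use the placement of $a$ and $b$ in the two specified maximal right ideals in an essential way.
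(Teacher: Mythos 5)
Your forward direction is correct and is essentially the paper's argument in a slightly different dress: each non-unit generates a proper right ideal and hence lies in some maximal right ideal, and the identity $ax+by=1$ prevents any one maximal right ideal from containing both $a$ and $b$ (the paper phrases this as $aR+bR\subseteq I\subset R$, you phrase it via the witnesses $x,y$; the content is the same).

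The converse is where the genuine gap lies, and you have located the sticking point but misdiagnosed it: the ``combining step'' cannot be carried out by any trick, because the implication as stated is false. The hypothesis asserts only the \emph{existence} of one pair $(I_1,I_2)$ separating $a$ and $b$; it does not exclude a \emph{third} maximal right ideal containing both of them, and any such ideal destroys unimodularity. Concretely, take $R=\mathbb{Z}/30\mathbb{Z}$, $a=6$, $b=10$, $I_1=3R$, $I_2=5R$: then $a\in I_1\setminus I_2$ and $b\in I_2\setminus I_1$, yet $aR+bR=2R\neq R$, so $(a,b)$ is not unimodular. Your computation $1=(ax+i)(by+j)=axby+axj+iby+ij$ only re-establishes $1\in aR+I_2$, which you already had; neither iteration nor passing to $R/\mathrm{rad}(R)$ will absorb the residual terms into $aR+bR$, precisely because of examples like the one above. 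The correct characterization is that $(a,b)$ (with both entries non-units) is unimodular if and only if \emph{no} maximal right ideal contains both $a$ and $b$; with that strengthened hypothesis the converse is immediate, since a proper $aR+bR$ sits inside some maximal right ideal, which then contains both entries. You should also be aware that the paper's own proof of the converse makes the same unjustified leap: from ``$aR+bR$ is contained in neither $I_1$ nor $I_2$'' it concludes that $aR+bR$ must be contained in a right ideal containing both $I_1$ and $I_2$, which does not follow -- $aR+bR$ could be contained in an unrelated maximal right ideal. So your inability to close the argument reflects a defect in the statement, not a missing technique.
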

\begin{proof}
$\Rightarrow$ Assume $(a,b)$ is unimodular.  Since $a,b\in R\setminus R^*$, $aR$ and $bR$ are right ideals strictly contained in $R$.  If there is some proper right ideal, $I$, that contains $a$ and $b$, then $aR+ bR\subseteq I\subset R$.  Hence, if $(a,b)$ is unimodular, then $a$ and $b$ cannot be contained in the same maximal ideal.

$\Leftarrow$ Assume $a\in I_1\setminus I_2$ and $b\in I_2\setminus I_1$, then $aR$ and $bR$ are right ideals for which $aR\subset I_1$ and $bR\subset I_2$.  $aR+ bR$ is a right ideal not contained in either $I_1$ or $I_2$.  Therefore $aR+ bR$ must be contained in a right ideal that contains both $I_1$ and $I_2$.  Since $I_1$ and $I_2$ are maximal, the only right ideal containing them both is $R$.  Hence $aR+ bR=R$, and $(a,b)$ is a unimodular vector of $^2\!R$.
\end{proof}
All type II unimodular vectors of $^2\!R$ conform to the conditions of Theorem \ref{thm:2maxideals}.

\begin{thm}\label{thm:alsounimodular} \label{thm:subset}
Let $R$ be a finite associative ring with unity.  $a,b,\alpha\in R$.
 \begin{enumerate}
 \item
 $R(\alpha a,\alpha b)\subseteq R(a,b)$.
 \item
 $R(\alpha a,\alpha b)=R(a,b)$ if and only if $\alpha\in R^*$.
 \item
 If $(a,b)$ is a unimodular vector in $^2\!R$, then $(\alpha a,\alpha b)$ is also unimodular if and only if $\alpha\in R^*$.\label{thm:4}
\end{enumerate}
\end{thm}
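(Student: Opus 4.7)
The plan is to dispatch the three parts in sequence, relying on the two characterizations of unimodularity given in the excerpt (the sum definition and the annihilator condition) together with the standard fact that in a finite associative ring any one-sided inverse is automatically two-sided, so ``$\alpha$ has a right (or left) inverse in $R$'' is equivalent to ``$\alpha\in R^{*}$''.

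Part (1) will come straight from unwinding definitions: a typical element of $R(\alpha a,\alpha b)$ is $(\beta\alpha a,\beta\alpha b)$, which I rewrite as $(\gamma a,\gamma b)$ with $\gamma=\beta\alpha\in R$, placing it in $R(a,b)$. For part (2), the $\Leftarrow$ direction will assume $\alpha\in R^{*}$ and exhibit, for any $\gamma\in R$, the identity $(\gamma a,\gamma b)=\bigl((\gamma\alpha^{-1})\alpha a,(\gamma\alpha^{-1})\alpha b\bigr)$, giving $R(a,b)\subseteq R(\alpha a,\alpha b)$ and, combined with part~(1), equality. For the converse, the equality of submodules places $(a,b)$ in $R(\alpha a,\alpha b)$, so there is some $\beta\in R$ with $\beta\alpha a=a$ and $\beta\alpha b=b$, i.e.\ $1-\beta\alpha\in{}^{\perp}a\cap{}^{\perp}b$. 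Under the tacit hypothesis that $R(a,b)$ is a free cyclic submodule, the annihilator characterization forces $\beta\alpha=1$, and finiteness promotes this to $\alpha\in R^{*}$.

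Part (3) will use the sum criterion for unimodularity. For $\Leftarrow$, given $ax+by=1$ and $\alpha\in R^{*}$, the computation $(\alpha a)(x\alpha^{-1})+(\alpha b)(y\alpha^{-1})=\alpha(ax+by)\alpha^{-1}=1$ certifies $(\alpha a,\alpha b)$ as unimodular. For $\Rightarrow$, any relation $(\alpha a)x+(\alpha b)y=1$ rearranges to $\alpha(ax+by)=1$, furnishing a right inverse for $\alpha$, whence $\alpha\in R^{*}$ by finiteness.

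The only genuine obstacle is the $\Rightarrow$ direction of part (2): without some non-triviality hypothesis on $(a,b)$ one has easy counterexamples (e.g.\ $R=\mathbb{Z}/6\mathbb{Z}$, $(a,b)=(0,2)$, $\alpha=4$, where $R(\alpha a,\alpha b)=R(a,b)$ yet $4\notin R^{*}$), so the assumption that $R(a,b)$ is a free cyclic submodule must be made explicit at that step in order to invoke ${}^{\perp}a\cap{}^{\perp}b=\{0\}$. Everything else amounts to routine bookkeeping with the two definitions of unimodularity and with finite-ring inverses.
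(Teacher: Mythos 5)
Your proof is correct and considerably more complete than the paper's, which for parts (1) and (2) merely restates the claims without argument and for part (3) defers entirely to a citation of Blunck--Havlicek. Your direct computations for part (1), the $\Leftarrow$ of part (2), and both directions of part (3) (using the $ax+by=1$ criterion plus the finite-ring fact that one-sided inverses are two-sided) are exactly the arguments one would want on record. More importantly, you have caught a real defect in the statement: the ``only if'' direction of part (2) is false as written, since the theorem places no hypothesis on $(a,b)$, and your counterexample in $\mathbb{Z}/6\mathbb{Z}$ with $(a,b)=(0,2)$ and $\alpha=4$ (or even more trivially $(a,b)=(0,0)$) shows that $R(\alpha a,\alpha b)=R(a,b)$ can hold with $\alpha\notin R^{*}$. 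Your repair --- assuming $R(a,b)$ is free (which holds in particular when $(a,b)$ is unimodular, the setting of part (3) and of all later applications of this theorem), extracting $\beta$ with $(1-\beta\alpha)\in{}^{\perp}a\cap{}^{\perp}b=\{0\}$, and then using finiteness to upgrade the left inverse $\beta\alpha=1$ to $\alpha\in R^{*}$ --- is the right one, and it supplies an argument the paper simply does not give. The only caveat is that you are proving a slightly amended statement; you should state the added freeness hypothesis explicitly rather than calling it ``tacit,'' since the theorem as printed does not contain it.
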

\begin{proof}
1, 2.  Let $R$ be a finite associative ring with unity. Then $R(\alpha a,\alpha b)\subseteq R(a,b)$.  If  $\alpha\in R^*$, then $R(\alpha a,\alpha b)= R(a,b)$.
3. \cite[Prop 2.1]{BH00}.
%
%
\end{proof}




Theorems \ref{thm:2maxideals} and \ref{thm:subset} give criteria for checking for unimodular vectors of $^2\!R$.  More difficult is finding outliers.

\subsection{Outliers}

In the light of Theorem \ref{thm:subset} we can refine our notion of outlier.
\begin{defi}
Let $R$ be a finite associative ring with unity.  $(a,b)$ is an \emph{outlier} of $^2\!R$ if there does not exist $\alpha,c,d\in R$ such that $(a,b)=(\alpha c,\alpha d)$ and $(c,d)$ is unimodular.
\end{defi}

\begin{thm}\label{thm:outlier}
Let $R$ be a finite associative ring with unity.  $(a,b)$ is an outlier of $^2\!R$  if and only if there exists a  right ideal $I\subseteq R$, such that  $a,b\in I$ and
\begin{enumerate}
 \item  there are no principal right ideals which contain both $a$ and $b$;
\item  for all principal right ideals $\alpha R$ such that $a,b\in \alpha R$, then $aR+ bR\subset \alpha R$.
\end{enumerate}
\end{thm}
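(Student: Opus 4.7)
The plan is to prove the two directions of the equivalence separately; the ``$\Leftarrow$'' direction is a short contrapositive argument, while the ``$\Rightarrow$'' direction requires constructing a suitable $I$ and contains the real difficulty.

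For $(\Leftarrow)$, suppose a right ideal $I$ with the listed properties exists, and assume for contradiction that $(a,b)$ is not an outlier. By the refined definition there exist $\alpha, c, d \in R$ with $(a,b) = (\alpha c, \alpha d)$ and $(c,d)$ unimodular, so $cR + dR = R$. Then
\begin{equation*}
aR + bR \;=\; \alpha c R + \alpha d R \;=\; \alpha(cR + dR) \;=\; \alpha R,
\end{equation*}
exhibiting $\alpha R$ as a principal right ideal containing $a$ and $b$ with $aR + bR = \alpha R$. This violates condition~2, which requires strict inclusion, and the contradiction establishes outlier status.

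For $(\Rightarrow)$, assuming $(a,b)$ is an outlier I would take $I := aR + bR$, which is patently a right ideal containing $a$ and $b$. To verify condition~2, suppose for contradiction that a principal right ideal $\alpha R$ contains $a, b$ with $aR + bR = \alpha R$. From $a,b \in \alpha R$ I get $a = \alpha c$ and $b = \alpha d$ for some $c, d \in R$, and from $\alpha \in aR + bR$ I get $\alpha = a x + b y$ for some $x,y$; substituting yields $\alpha = \alpha(cx + dy)$, so $cx + dy - 1 \in \alpha^{\perp}$. The task is then to upgrade this ``unimodularity modulo $\alpha^{\perp}$'' to a genuine unimodular pair $(c',d')$ with $\alpha c' = a$ and $\alpha d' = b$, which would contradict the outlier hypothesis. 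Condition~1, read literally, is vacuous because $R = 1 \cdot R$ is itself a principal right ideal containing $a$ and $b$; the effective content is condition~2, though one may alternatively read condition~1 as the non-principality of $I$, which in turn follows from condition~2 applied to $\alpha R = I$.

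The main obstacle is precisely the lifting step at the end of the previous paragraph: one must perturb $(c,d)$ by elements of $\alpha^{\perp}$ to obtain $(c',d')$ with $c'R + d'R = R$. A naive substitution $c' = c + e$, $d' = d$ with $e \in \alpha^{\perp}$ does not collapse the obstruction $1 - cx - dy$ unless something like $x$ happens to be a unit, so the argument must exploit the finiteness of $R$ in a substantive way---most naturally via the stable-range-one property enjoyed by every finite semi-local ring, which is the standard device for lifting a unimodular relation across the right ideal $\alpha^{\perp}$ back to $R$ itself.
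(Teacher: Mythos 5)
Your backward direction is correct and is in fact tidier than the paper's: the single computation $aR+bR=\alpha cR+\alpha dR=\alpha(cR+dR)=\alpha R$ simultaneously contradicts condition~1 (the principal right ideal $\alpha R$ contains both $a$ and $b$) and condition~2 (the inclusion $aR+bR\subseteq \alpha R$ fails to be strict), whereas the paper treats the two conditions as separate cases. Your reading of the awkwardly worded statement --- that condition~1 must refer to \emph{proper} principal right ideals and is really the degenerate instance of condition~2 --- agrees with how the paper's own proof handles them (as an ``either/or'').

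The forward direction, however, contains a genuine gap, which you have located precisely but not closed: from $a=\alpha c$, $b=\alpha d$ and $\alpha\in aR+bR$ you obtain $e:=1-(cx+dy)\in\alpha^{\perp}$, and you must upgrade this to a genuinely unimodular pair $(c',d')$ with $c'\in c+\alpha^{\perp}$ and $d'\in d+\alpha^{\perp}$; you name stable range one as the likely device but do not carry out the step, so as written the proof is incomplete. The tool you name does work, and here is how: since $cx+dy+e\cdot 1=1$, the row $(c,d,e)$ is right unimodular of length three; a finite ring is semiperfect, hence has stable range one, hence satisfies the length-three reduction: there exist $t_1,t_2$ with $(c+et_1)R+(d+et_2)R=R$. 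Because $\alpha^{\perp}$ is a right ideal, $et_1,et_2\in\alpha^{\perp}$, so $\alpha(c+et_1)=a$ and $\alpha(d+et_2)=b$, contradicting that $(a,b)$ is an outlier. (You should also note that $I=aR+bR$ is proper, since an outlier cannot be unimodular because $(a,b)=1\cdot(a,b)$.) For comparison, the paper sidesteps stable range: it introduces $C=\{c\}+\alpha^{\perp}$ and $D=\{d\}+\alpha^{\perp}$, passes from $cR+dR+\alpha^{\perp}=R$ to $CR+DR=R$, and extracts a single pair $c\in C$, $d\in D$ with $cx+dy=1$. That extraction is exactly the lifting you flagged, and the paper's justification of it is thinner than your discussion of the difficulty; your instinct that finiteness must enter substantively at this point is sound, and the stable-range argument above is the clean way to complete either version of the proof.
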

\begin{proof}
$\Rightarrow$ Theorem \ref{thm:2maxideals}
shows that if $(a,b)$ is an outlier of $^2\!R$ then $a$ and $b$ are contained in some   maximal right ideal.
Either $a,b\not\in \alpha R$ for some $\alpha \in R\setminus R^*$ (showing part 2) or there exists $\alpha,c,d$ such that $(\alpha c,\alpha d)=(a,b)$ only if $(c,d)$ is not unimodular.
Assume that there exists  $(\alpha c,\alpha d)=(a,b)$ with  $(c,d)$ not unimodular.  Let $C=\{c:\alpha c=a\}$ and $D=\{d:\alpha d=b\}$.  If   $\alpha x=\alpha y$ with $x\neq y$, then $\alpha(x-y)=0$, thus $(x-y)\in \alpha^\perp$. Since,
\begin{equation}\label{eqn:plusperp}
 C=\{c\}+ \alpha^\perp\quad\quad \mbox{and}\quad\quad D=\{d\}+ \alpha^\perp,
\end{equation}
one gets
\begin{align*}
 aR+ bR  & = \alpha cR+ \alpha d R \\
 & =\alpha(cR+ dR+ \alpha^\perp).
\end{align*}
Thus $aR+ bR =\alpha R$ if and only if there exists $c\in C$ and $d\in D$ such that $cR+ dR+ \alpha^\perp=R$.
let $CR=\{cr:c\in C,r\in R\}$ and $DR=\{dr:d\in D,r\in R\}$.   Since we can choose any $c\in C$ and $d\in D$, we require that
\begin{align*}
 CR+ DR+ \alpha^\perp & =R.
\end{align*}
From equation (\ref{eqn:plusperp}) it follows:
\begin{align*}
 CR+ DR+ \alpha^\perp & = CR+ DR.
\end{align*}
If $CR+ DR=R$, then there exists $c\in C$ and $d\in D$ and $x,y\in R$ such that $cx+dy=1$, implying that $(c,d)$ is a unimodular vector.  This contradicts that $(a,b)$ is an outlier, and hence we find that $aR+ bR\subset \alpha R$ (showing part 1).

$\Leftarrow$
1. If all right ideals that contain $a$ and $b$ are non-principal, then there does not exist $\alpha\in R\setminus R^*$ such that $a,b\in \alpha R$.  Hence $(a,b)$ is an outlier of $^2\!R$.

2. Let  $\alpha R$ be a principal right ideal for which $a,b\in \alpha R$. Then there exists $c,d$ such that $\alpha c=a$ and $\alpha d=b$.   If $aR+ bR\subset \alpha R$, then
\begin{align*}
 (\alpha c)R+ (\alpha d)R & \subset \alpha R,\\
\alpha(cR+ dR) & \subset \alpha R,\\
cR+ dR & \subset R,
\end{align*}
and $(c,d)$ is not unimodular.  This holds for all principal right ideals containing $a$ and $b$;  hence $(a,b)$ is an outlier.
\end{proof}

If $R$ is commutative then $(a,b)$ is an outlier of the left module exactly when $(a,b)^T$ is an outlier of the right module.  In a non-commutative ring, the set of left outliers may be different to the set of right outliers (the smallest example is the ring of ternions of order 8 \cite{HS09}).  The set of outliers is dependent on the structure of the ideals of the ring.  If the left and right ideals of a ring have  different structures, then a left outlier may be right unimodular.

From Theorems \ref{thm:2maxideals} and \ref{thm:outlier} we get that:
\begin{lem}  If $a$ and $b$ are in some right ideal which is non principal,  not both in any principal right ideal, and not both in the same  maximal left ideal, then $(a,b)$ is an outlier and $(a,b)^T$ is  unimodular.
\end{lem}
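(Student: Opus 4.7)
The plan is to split the lemma into its two conclusions and dispatch each by invoking an earlier theorem, after first verifying that $a$ and $b$ are non-units.

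First I would observe that $a,b\in R\setminus R^*$. Indeed, the hypothesis places $a,b$ in a common non-principal right ideal $I$; since $R=1\cdot R$ is itself principal, $I$ must be a proper right ideal, so in particular neither $a$ nor $b$ can be a unit (a unit would generate all of $R$ and hence could not lie in a proper right ideal).

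For the outlier claim, I would apply Theorem \ref{thm:outlier} using the very right ideal $I$ supplied by the hypothesis. The assumption that $a$ and $b$ are not both contained in any principal right ideal is exactly clause (1) of that theorem, so the sufficient condition is met and $(a,b)$ is an outlier of $^{2}\!R$.

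For the unimodularity of $(a,b)^T$ in the right module, I would use the left-module mirror of Theorem \ref{thm:2maxideals}: if $a,b\in R\setminus R^*$, then $Ra+Rb=R$ if and only if there exist maximal left ideals $J_1,J_2$ with $a\in J_1\setminus J_2$ and $b\in J_2\setminus J_1$. Because $a$ is a non-unit, $Ra$ is a proper left ideal and so (by finiteness of $R$) is contained in some maximal left ideal $J_1$; likewise $Rb\subseteq J_2$ for some maximal left ideal $J_2$. The hypothesis that $a$ and $b$ do not share a maximal left ideal forces $a\notin J_2$ and $b\notin J_1$, giving the required separation $a\in J_1\setminus J_2$, $b\in J_2\setminus J_1$. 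The mirrored theorem then yields $Ra+Rb=R$, i.e. the existence of $x,y\in R$ with $xa+yb=1$, which is precisely the unimodularity of the column vector $(a,b)^T$ over the right module.

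The only genuinely delicate step is the assertion that we may indeed mirror Theorem \ref{thm:2maxideals}. This is not proved in the text, but it is a purely formal swap of ``left'' and ``right'' throughout the proof, relying on the fact (already implicit in Section \ref{defi}) that the roles of principal left and principal right ideals are symmetric, together with the finiteness of $R$ (used to pass from ``proper left ideal'' to ``contained in a maximal left ideal''). I would include a brief parenthetical remark justifying this, and that is all; no further calculation is required.
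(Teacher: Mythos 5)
Your proof is correct and follows exactly the route the paper intends: the paper offers no written proof, only the remark that the lemma follows from Theorems \ref{thm:2maxideals} and \ref{thm:outlier}, and your argument is precisely a careful expansion of that derivation (clause 1 of Theorem \ref{thm:outlier} for the outlier claim, the left--right mirror of Theorem \ref{thm:2maxideals} for the unimodularity of $(a,b)^T$). The supporting details you add --- that a non-principal right ideal is automatically proper, hence $a,b\in R\setminus R^*$, and that maximal left ideals separating $a$ and $b$ exist by finiteness --- are exactly the ones the paper leaves implicit.
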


\subsection{Free cyclic submodules and outliers}
  We have established that the structure of the ideals of a ring determines the set of outliers and unimodular vectors.  The Jacobson radical is an important ideal and  of crucial importance in the study of unimodular vectors.

\begin{defi}\cite[\S 4]{Lam01} For a finite ring $R$, the \emph{Jacobson radical}, $\textrm{rad}(R)$,  may be equivalently defined as:
\begin{itemize}
 \item  the intersection of all the maximal left ideals of $R$;
\item the largest left ideal  $J$ such that $1+j\in R^*$ for all $j\in J$.
\end{itemize}
\end{defi}
Note that the Jacobson radical is a left {\em and} right ideal.
\begin{defi}\cite[Defi 4.9]{Lam01} A one-sided or two-sided ideal, $I$, is \emph{nilpotent} of \emph{nilpotency} $m$ if $a_1.a_2\dots a_m=0$ for any set of elements $a_1,a_2,\dots,a_m \in I$.
\end{defi}

\begin{lem}\cite[Thm 4.12]{Lam01}\label{thm:Jnilpotent} Let $R$ be a finite associative ring.  Then $\textrm{rad}(R)$ is nilpotent.
\end{lem}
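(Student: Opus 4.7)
The plan is to combine the descending chain condition (automatic because $R$ is finite) with the characterization of $\textrm{rad}(R)$ as the largest ideal all of whose $(1+j)$ are units. Writing $J = \textrm{rad}(R)$, I would first consider the descending chain of two-sided ideals
\begin{equation*}
J \supseteq J^2 \supseteq J^3 \supseteq \cdots ,
\end{equation*}
which must stabilize since $R$ has only finitely many subsets. Thus there exists $k\ge 1$ with $J^k = J^{k+1}$. Set $I := J^k$, so that $I^2 = I$ and $JI = I$. The goal becomes $I = 0$.

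Next I would argue by contradiction, using a Nakayama-style minimality step. Assume $I\ne 0$ and consider the family $\mathcal{F}$ of left ideals $L$ of $R$ with $IL \ne 0$. This family is non-empty since $I\cdot R = I \ne 0$, and by finiteness of $R$ it contains a minimal member $L_0$. Choose $x\in L_0$ with $Ix \ne 0$. Then $Ix$ is itself a left ideal contained in $L_0$ (because $L_0$ is a left ideal and $x\in L_0$), and
\begin{equation*}
I(Ix) = I^2 x = Ix \ne 0,
\end{equation*}
so $Ix\in\mathcal{F}$. By minimality of $L_0$, we must have $Ix = L_0$, and in particular $x\in Ix$. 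Hence there is some $r\in I$ with $rx = x$, i.e.\ $(1-r)x = 0$.

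The final step exploits the second characterization of the Jacobson radical stated in the definition above: since $r\in I\subseteq J$, the element $1-r$ is a unit of $R$. Therefore $x = (1-r)^{-1}(1-r)x = 0$, contradicting $Ix\ne 0$. This forces $I = J^k = 0$, so $J$ is nilpotent of nilpotency at most $k$, matching the definition of nilpotent ideal given earlier.

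The only genuine subtlety is verifying the stabilization step $I^2 = I$ and then pinning down why the minimal $L_0$ must equal $Ix$; once those are in place, the unit property of $1-r$ delivers the contradiction in one line. I expect no computational obstacle, only the need to invoke finiteness of $R$ carefully, since that is what replaces the usual appeal to the descending chain condition in the classical (left Artinian) statement of this result.
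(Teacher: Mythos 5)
Your proof is correct: the chain $J\supseteq J^2\supseteq\cdots$ stabilizes by finiteness, the minimal left ideal $L_0$ with $IL_0\neq 0$ forces $x=rx$ for some $r\in I\subseteq\textrm{rad}(R)$, and the unit $1-r$ kills $x$, giving $J^k=0$ and hence nilpotency in the sense of the paper's definition. The paper itself offers no proof of this lemma --- it is stated as a citation of \cite[Thm 4.12]{Lam01} --- and your argument is essentially the standard one from that reference, so there is nothing further to compare.
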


\begin{thm}\label{thm:1}
Let $R$ be a finite associative ring. Let $J \equiv \textrm{rad}(R)$.  Then no vector from $^n\!J$  generates a free cyclic submodule.
\end{thm}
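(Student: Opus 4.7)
The plan is to exhibit, for any vector $(a_1,\ldots,a_n) \in {}^n\!J$, a single nonzero element $x \in R$ that annihilates every component $a_i$ on the left. The $n$-ary analogue of the annihilator characterization of free cyclic submodules — namely, $R(a_1,\ldots,a_n)$ is free iff $\bigcap_i{}^\perp a_i = \{0\}$, which is immediate from the definition of freeness since $\alpha(a_1,\ldots,a_n) = 0$ iff $\alpha \in {}^\perp a_i$ for every $i$ — then forces the conclusion.

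The engine of the argument is Lemma \ref{thm:Jnilpotent}, the nilpotency of $J$. Let $m$ be the nilpotency index, i.e., the least integer such that every product of $m$ elements of $J$ is zero. If $J = \{0\}$ the theorem is vacuous (the only candidate vector is $0$, which is never a free generator), so I may assume $m \geq 2$. By minimality of $m$ there exist $y_1,\ldots,y_{m-1} \in J$ with
\begin{equation*}
x := y_1 y_2 \cdots y_{m-1} \neq 0.
\end{equation*}

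Now for any $j \in J$, the product $xj = y_1 \cdots y_{m-1} \, j$ is a product of $m$ elements of $J$ and therefore vanishes. Hence $x \in {}^\perp j$ for every $j \in J$. Applying this to $j = a_i$ for $i = 1, \ldots, n$ shows $x \in \bigcap_{i=1}^n {}^\perp a_i$, and since $x \neq 0$ this intersection is nontrivial. By the annihilator characterization, $R(a_1,\ldots,a_n)$ is not a free cyclic submodule.

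There is no serious obstacle: both ingredients — the annihilator criterion for freeness and the nilpotency of the Jacobson radical — are already in hand from the preliminaries. The only mildly subtle point is the (tacit) use of the $n$-component version of the annihilator lemma in place of the $2$-component version stated earlier, but its proof is a one-line unwinding of the definition of a free cyclic submodule and needs no extra hypotheses.
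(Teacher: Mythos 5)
Your proof is correct and follows essentially the same route as the paper's: use the nilpotency of $J$ (Lemma \ref{thm:Jnilpotent}) to produce a nonzero element annihilating every entry of the vector on the left, then conclude via the annihilator criterion for freeness. You are in fact slightly more careful than the paper, which takes an arbitrary product $\alpha = x_1 x_2\cdots x_{m-1}$ of elements of $J$ without explicitly invoking minimality of the nilpotency index to ensure $\alpha\neq 0$ (and without disposing of the trivial case $J=\{0\}$); your version closes those small gaps.
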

\begin{proof}
From Lemma \ref{thm:Jnilpotent} it readily follows that
  $J$ has nilpotency $m$ for some $m\in \mathbb{N}$. Let $(a_1, a_2,\dots, a_n) \in \;^n\!J$, $x_1, x_2,\dots, x_{m-1} \in J$  and $\alpha = x_1.x_2\dots x_{m-1}$. Then  $(\alpha a_1, \alpha a_2,\dots, \alpha a_n)\\ = (0, 0,\dots, 0)$. Hence $R(a_1, a_2,\dots, a_n)$ is not a free cyclic submodule.
\end{proof}

\begin{defi}\cite[\S 19]{Lam01}
 A \emph{local} ring is an associative ring that has exactly one maximal left (and also right) ideal.
\end{defi}

As a side note we mention that geometries over local rings are called Hjelmslev geometries \cite[\S 9]{veld95}, and have applications in coding theory \cite{HL09}.

\begin{thm}\label{thm:local_outlier}
 Let $R$ be a local ring.  \begin{enumerate}
\item   No outliers of $^2\!R$ generate free cyclic submodules.
 \item $(a,b)$ is an outlier of $^2\!R$ if and only if $a\not\in bR$ and $b\not\in aR$.
 \end{enumerate}
\end{thm}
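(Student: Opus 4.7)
The plan is to dispatch the two parts of the theorem separately, both leveraging the fact that in a finite local ring $R$ the Jacobson radical $J$ coincides with $R\setminus R^*$ and is a two-sided ideal (as noted earlier in the paper).

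For part 1, I would first show that any outlier $(a,b)\in{}^2R$ must have $a,b\in J$. If either entry were a unit, then $aR+bR=R$ would make $(a,b)$ itself unimodular, so by Lemma \ref{lem:veld} the submodule $R(a,b)$ would be generated by a unimodular vector, contradicting outlier status. Hence $(a,b)\in{}^2J$, and Theorem \ref{thm:1} immediately yields that $R(a,b)$ is not a free cyclic submodule.

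For the forward direction of part 2, I would argue by contrapositive. If $b=ar\in aR$, then $(a,b)=a\cdot(1,r)$, and since $(1,r)$ is unimodular (having $1$ as an entry), the refined definition of outlier is violated; the case $a\in bR$ is symmetric. For the converse, assume $a\notin bR$ and $b\notin aR$ and suppose, towards a contradiction, that $(a,b)=(\alpha c,\alpha d)$ with $(c,d)$ unimodular. The crucial step I would prove en route is that in a local ring any unimodular pair must have at least one unit entry: from $cx+dy=1$ with $c,d\in J$ one would get $1\in J$ because $J$ is a two-sided ideal, contradicting properness of $J$. Taking without loss of generality $c\in R^*$, we then have $\alpha=ac^{-1}$, so $b=\alpha d=a(c^{-1}d)\in aR$, contradicting the hypothesis; the case of $d$ being a unit symmetrically gives $a\in bR$.

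The only step that is not a direct invocation of a prior result is the local-ring observation that a unimodular pair must have a unit coordinate, and that is the main conceptual point; once it is in hand, both directions of part 2 reduce to one-line contradictions.
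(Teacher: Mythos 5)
Your proposal is correct and follows essentially the same route as the paper: part 1 reduces outliers to vectors in $^2J$ and invokes Theorem \ref{thm:1}, and part 2 hinges on the observation that every unimodular vector over a local ring has a unit entry, so non-outliers are exactly the vectors with $b\in aR$ or $a\in bR$. The only cosmetic difference is that you derive the unit-entry fact directly from $cx+dy=1$ with $J$ a two-sided ideal, whereas the paper cites Theorem \ref{thm:2maxideals}; both are valid.
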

\begin{proof}
1. $R$ has exactly one maximal ideal, which is therefore the Jacobson radical,  $J$.  All ring elements not belonging to $J$ are units.  Hence any outlier of $^2\!R$ has both entries as elements of $J$. Theorem \ref{thm:1} shows that no vectors with both entries from $J$ can generate a free cyclic submodule.

2. $J$, the unique maximal ideal of $R$, cannot generate $R$.  By Theorem \ref{thm:2maxideals}, no unimodular vector of $^2\!R$ can contain elements of the same maximal ideal.  Since every element of $R$ is either a unit or an element of $J$, all unimodular vectors have a unit entry; all unimodular vectors are of type I. So, the outliers of $^2\!R$ are those vectors which are not contained in a free cyclic submodule of $^2\!R$  generated by $(1,x)$ or $(x,1)$, $x\in R$.  A vector which is not an outlier is of the form
\begin{equation*}
 (a,ax) \quad\quad\mbox{or}\quad\quad (ax,a),\;\mbox{for some}\;a,x\in R.
\end{equation*}
Thus outliers are those vectors which do not fit this form.  If $(a,b)$ is contained in a free cyclic submodule then there exists $x\in R$, such that $ax=b$ or $bx=a$.  Hence $b\in aR$ or $a\in bR$.  If $a\not\in bR$ and $b\not\in aR$, then $(a,b)$ is an outlier.
\end{proof}

\begin{cor}\label{cor:local}
 If $R$ is a finite local ring, then $R(a,b)$ is a free cyclic submodule if and only if $(a,b)$ is unimodular.
\end{cor}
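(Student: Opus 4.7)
The plan is to prove the two implications separately, treating the easy direction first. If $(a,b)$ is unimodular then $R(a,b)$ is a free cyclic submodule by Lemma~\ref{lem:veld}; this direction does not use locality. The real content is the converse, for which I want to prove the contrapositive: if $(a,b)$ is not unimodular, then $R(a,b)$ is not a free cyclic submodule. The key observation is that the refined notion of outlier (the definition following Theorem~\ref{thm:subset}) gives a dichotomy on non-unimodular vectors: either $(a,b)$ is itself an outlier, or $(a,b) = (\alpha c, \alpha d)$ for some unimodular $(c,d)$ and some $\alpha \in R$. I will dispose of the two cases using two different earlier results.

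In the first case, $(a,b)$ is an outlier, and Theorem~\ref{thm:local_outlier}(1) says directly that no outlier of ${}^2\!R$ generates a free cyclic submodule when $R$ is local. So $R(a,b)$ is not free and we are done. In the second case, $(a,b) = (\alpha c, \alpha d)$ with $(c,d)$ unimodular but $(a,b)$ not unimodular; then by Theorem~\ref{thm:alsounimodular}(3) the scalar $\alpha$ cannot be a unit. This is where locality enters: in a local ring the set of non-units $R \setminus R^*$ is exactly the unique maximal ideal, which coincides with the Jacobson radical $J$. Therefore $\alpha \in J$, and since $J$ is a two-sided ideal (as noted after the definition of the Jacobson radical), both $a = \alpha c$ and $b = \alpha d$ lie in $J$. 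Thus $(a,b) \in {}^2\!J$.

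Now Theorem~\ref{thm:1} finishes the job: no vector of ${}^n\!J$ generates a free cyclic submodule, so in particular $R(a,b)$ is not free. Combining the two cases proves the contrapositive of the non-trivial direction.

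There is no substantial obstacle here; the work has already been done in the preceding theorems and the corollary is essentially their assembly. The only mildly subtle step is recognizing that the clean dichotomy ``outlier versus $\alpha$-multiple of a unimodular vector'' exactly matches the hypotheses of Theorem~\ref{thm:local_outlier}(1) and Theorem~\ref{thm:1} respectively, and that locality is what collapses ``$\alpha$ non-unit'' into ``$\alpha \in J$'' so that the second theorem becomes applicable.
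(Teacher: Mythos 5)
Your proof is correct and assembles exactly the ingredients the paper relies on for this corollary, namely Theorem~\ref{thm:local_outlier}(1) and Theorem~\ref{thm:1}. The outlier/non-outlier case split is in fact avoidable: in a local ring any vector with a unit entry is unimodular, so a non-unimodular $(a,b)$ automatically has both entries in $J=R\setminus R^*$ and Theorem~\ref{thm:1} applies directly---which is the one-step argument implicit in the paper---but your two cases each funnel into the same conclusion, so nothing is lost.
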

This is a class of rings for which all free cyclic submodules are generated by unimodular vectors (the reverse of Lemma \ref{lem:veld}).
In particular, this means that Hjelmslev geometries (which are geometries over a local ring) cannot have non-unimodular points.

Next we look at another property of ideals which precludes the existence of non-unimodular free cyclic submodules.

\begin{lem}
\label{thm:neccesaryright}
Let $R$ be a finite associative ring with unity.  Let $a,b$ be elements of the same principal proper right ideal, $\alpha R$, then $R(a,b)$ is not a free cyclic submodule of $^2\!R$.
\end{lem}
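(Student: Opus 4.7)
The plan is to exhibit a nonzero left annihilator common to both $a$ and $b$, then invoke the characterization of free cyclic submodules via equation (\ref{eqn:perp}).

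First I would observe that since $\alpha R$ is a proper right ideal, $\alpha$ cannot be a unit of $R$. Because $R$ is finite, the map $x\mapsto x\alpha$ from $R$ to $R$ cannot be surjective (its image is $R\alpha$, and if $R\alpha=R$ then in particular $1=x\alpha$ for some $x$, making $\alpha$ a right-invertible element of a finite ring, hence a unit), so by the pigeonhole principle it is not injective either. Consequently there exist $x\neq y$ in $R$ with $x\alpha=y\alpha$, giving a nonzero element $\beta:=x-y$ with $\beta\alpha=0$. In other words, $^\perp\alpha\neq\{0\}$.

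Next, since $a,b\in\alpha R$, I would write $a=\alpha c$ and $b=\alpha d$ for some $c,d\in R$. Then
\begin{equation*}
\beta a = \beta\alpha c = 0 \quad\text{and}\quad \beta b = \beta\alpha d = 0,
\end{equation*}
so $\beta\in{^\perp a}\cap{^\perp b}$. Since $\beta\neq 0$, this intersection is strictly larger than $\{0\}$, and the lemma characterizing free cyclic submodules via equation (\ref{eqn:perp}) then tells us that $R(a,b)$ is not a free cyclic submodule.

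The only genuinely non-formal step is the finiteness argument that gives $^\perp\alpha\neq\{0\}$ from $\alpha\notin R^*$; once that is in hand, everything else is a one-line substitution. This is the same fact alluded to in Section \ref{defi} (every element of a finite unital ring is either a unit or a two-sided zero divisor), so no new machinery is required.
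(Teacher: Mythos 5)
Your proof is correct and follows essentially the same route as the paper: both arguments reduce the claim to showing that $^\perp a\cap{}^\perp b\supseteq{}^\perp\alpha\neq\{0\}$ and then invoke the annihilator characterization of free cyclic submodules. The only difference is that you spell out the finiteness/pigeonhole justification for $^\perp\alpha\neq\{0\}$, which the paper leaves implicit via the cited fact that every non-unit of a finite unital ring is a two-sided zero divisor.
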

\begin{proof}
$a=\alpha c$ and $b=\alpha d$.  Then
\begin{align*}
 ^\perp\! a & =\{x:xa=0\}\\
 & =  \{x:x\alpha c=0\}\\
 & \supseteq \{x:x\alpha=0\}\\
 & = \;^\perp\!\alpha.
\end{align*}
By the same logic $^\perp\! b\supseteq \;^\perp\!\alpha$.  Hence $^\perp\!a\cap\;^\perp\!b\supseteq \;^\perp\!\alpha\neq \{0\}$.
\end{proof}
Lemma \ref{thm:neccesaryright} then gives the following important result.
\begin{thm}\label{thm:nonprincipalideals} Let $R$ be a finite associative ring with unity.  If every right ideal is a principal ideal, then there are no free cyclic submodules of $^2\!R$ generated by non-unimodular vectors.
\end{thm}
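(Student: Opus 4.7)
My plan is to prove the contrapositive: assume $(a,b) \in {}^2\!R$ is non-unimodular and show that $R(a,b)$ fails to be a free cyclic submodule, under the hypothesis that every right ideal of $R$ is principal. The strategy is to package $a$ and $b$ into a common principal proper right ideal and then invoke Lemma \ref{thm:neccesaryright}, which already does the heavy lifting of showing that any two elements of a proper principal right ideal have a common nonzero left annihilator.

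First I would form the right ideal $aR + bR$. Since $(a,b)$ is non-unimodular, by definition $aR + bR \neq R$, so $aR + bR$ is a proper right ideal of $R$. By the hypothesis that every right ideal of $R$ is principal, there exists $\alpha \in R$ with $aR + bR = \alpha R$. Because $\alpha R \neq R$, the element $\alpha$ cannot be a unit, so $\alpha R$ is a proper principal right ideal. Moreover, $a = a \cdot 1 \in aR \subseteq aR + bR = \alpha R$ and likewise $b \in \alpha R$, so $a$ and $b$ are both elements of the same proper principal right ideal $\alpha R$.

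At this point Lemma \ref{thm:neccesaryright} applies directly: two elements of the same principal proper right ideal cannot generate a free cyclic submodule. Hence $R(a,b)$ is not free. This gives the contrapositive: whenever $R(a,b)$ is a free cyclic submodule, $(a,b)$ must be unimodular, which is the claim of the theorem.

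I do not foresee any real obstacle here; the proof is essentially a one-line reduction once we observe that non-unimodularity forces $aR + bR$ to be proper and principal hypothesis then automatically gives a common principal proper right ideal containing $a$ and $b$. The only thing to be careful about is that the lemma is stated for a \emph{proper} principal right ideal, so I must explicitly note that the generator $\alpha$ is a non-unit (equivalently, $\alpha R \subsetneq R$); this is immediate from non-unimodularity.
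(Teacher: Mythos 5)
Your proof is correct and follows the same route the paper intends: the paper derives this theorem directly from Lemma \ref{thm:neccesaryright}, and your argument supplies exactly the connecting step (non-unimodularity makes $aR+bR$ a proper right ideal, the principality hypothesis makes it $\alpha R$ for a non-unit $\alpha$, and the lemma then applies). No gaps.
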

This shows that a necessary condition for the existence of non-unimodular free cyclic submodules of $^2\!R$ is the presence of non-principal  right ideals.
\begin{cor}\label{cor:principalideal}
 Let $R$ be a principal ideal ring,  then $R(a,b)$ is a free cyclic submodule if and only if $(a,b)$ is unimodular.
\end{cor}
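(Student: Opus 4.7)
The plan is to observe that Corollary \ref{cor:principalideal} is essentially a direct packaging of two earlier results and does not require any new content. A principal ideal ring is, by definition, a ring in which every (one-sided) ideal is principal, so in particular every right ideal is principal. That is exactly the hypothesis of Theorem \ref{thm:nonprincipalideals}.

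For the forward direction, I would argue by contrapositive: suppose $(a,b)$ is not unimodular. Since every right ideal of $R$ is principal, Theorem \ref{thm:nonprincipalideals} applies and rules out the existence of any free cyclic submodule of $^2\!R$ generated by a non-unimodular vector. Hence $R(a,b)$ cannot be a free cyclic submodule. For the reverse direction, I would simply invoke Lemma \ref{lem:veld}, which states that every unimodular vector of $^2\!R$ generates a free cyclic submodule, with no assumption on $R$ needed.

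The main obstacle here is really just bookkeeping: one must be a little careful that the term ``principal ideal ring'' is used in the sense that covers both left and right ideals (or at least the right ideals, which is what Theorem \ref{thm:nonprincipalideals} uses). Once that is noted, the corollary is immediate from the two quoted results and the proof amounts to two lines, one for each implication. No new calculation or ideal-theoretic argument is required beyond what has already been established.
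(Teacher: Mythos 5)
Your proposal is correct and matches the paper's (implicit) reasoning exactly: the corollary is stated without proof precisely because it follows immediately from Theorem~\ref{thm:nonprincipalideals} in one direction and Lemma~\ref{lem:veld} in the other. Your remark about checking that ``principal ideal ring'' covers right ideals is a sensible bit of care, but no further argument is needed.
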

This is another class of rings (see Corollary \ref{cor:local}) for which all free cyclic submodules are generated by unimodular vectors.

When using associative rings,  free cyclic submodules are generated by either unimodular vectors or outliers.  If the assumption of associativity is removed, then this is no longer true.

\begin{lem}
 Let $R$ be a finite ring with unity and let  $(a,b)$ be a unimodular vector in $^2\!R$.  If there exists $\alpha$ such that $(\alpha a,\alpha b)$ is a non-unimodular vector  and $R(\alpha a,\alpha b)$ is a free cyclic submodule of $^2\!R$, then $R$ is non-associative.
\end{lem}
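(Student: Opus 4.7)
The cleanest route is to prove the contrapositive: assume $R$ is associative (in addition to finite and with unity) and deduce that $R(\alpha a,\alpha b)$ fails to be a free cyclic submodule.

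First I would invoke Theorem \ref{thm:subset}(\ref{thm:4}): since $(a,b)$ is unimodular, $(\alpha a,\alpha b)$ is unimodular if and only if $\alpha\in R^*$. The hypothesis that $(\alpha a,\alpha b)$ is non-unimodular therefore forces $\alpha\in R\setminus R^*$. By the dichotomy recalled in Section \ref{defi} (an element of a finite associative ring with unity is either a unit or a two-sided zero divisor), $\alpha$ is a zero divisor, so there exists $\beta\in R$ with $\beta\neq 0$ and $\beta\alpha=0$.

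Next, associativity lets me propagate this annihilation through the products $\alpha a$ and $\alpha b$:
\begin{equation*}
\beta(\alpha a)=(\beta\alpha)a=0\cdot a=0,\qquad \beta(\alpha b)=(\beta\alpha)b=0.
\end{equation*}
Hence $\beta\in {}^{\perp}(\alpha a)\cap{}^{\perp}(\alpha b)$, and since $\beta\neq 0$ the intersection is nonzero. By the characterization of free cyclic submodules via equation (\ref{eqn:perp}), $R(\alpha a,\alpha b)$ is not free, contradicting the hypothesis. Therefore $R$ cannot be associative.

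There is no real obstacle here: the argument is essentially a bookkeeping exercise that isolates exactly which step uses associativity, namely the reassociation $\beta(\alpha a)=(\beta\alpha)a$. The only thing to be careful about is that ``$\alpha\notin R^*$ implies $\alpha$ is a left zero divisor with nonzero left annihilator'' genuinely requires both finiteness and associativity of $R$, both of which are in force in our contrapositive assumption; without associativity, the dichotomy unit-vs-zero-divisor need not hold, and even if it did, the chain $\beta(\alpha a)=(\beta\alpha)a$ would break, which is precisely why non-associative examples can produce free cyclic submodules on non-unimodular multiples of unimodular vectors.
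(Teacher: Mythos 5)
Your argument is correct, but it reaches the contradiction by a different mechanism than the paper. The paper's proof also starts by using Theorem~\ref{thm:4} to force $\alpha\in R\setminus R^*$, but then exploits \emph{freeness from above}: since $R(\alpha a,\alpha b)\subseteq R(a,b)$ and both are free (hence of cardinality $|R|$), the two submodules coincide, so some $\beta$ satisfies $(\beta\alpha a,\beta\alpha b)=(a,b)$; unimodularity of $(a,b)$ then forces $\beta\alpha=1$, contradicting $\alpha\notin R^*$. You instead attack freeness \emph{from below} via the annihilator criterion of equation~(\ref{eqn:perp}): the unit-or-two-sided-zero-divisor dichotomy gives a nonzero $\beta$ with $\beta\alpha=0$, and associativity places $\beta$ in ${}^{\perp}(\alpha a)\cap{}^{\perp}(\alpha b)$. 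Your route is the shorter one and is essentially a special case of the paper's own Lemma~\ref{thm:neccesaryright} (with $\alpha a,\alpha b\in\alpha R$ and $\alpha R$ proper), which you could cite outright; note that after establishing $\alpha\in R\setminus R^*$ you no longer use the unimodularity of $(a,b)$ at all, so you are in effect proving the stronger statement that no non-unit multiple of \emph{any} vector generates a free cyclic submodule in an associative ring. The paper's version, by contrast, makes visible the structural fact that a free $R(\alpha a,\alpha b)$ would have to equal $R(a,b)$, which is the observation its surrounding discussion (on non-associative examples where $R(\alpha a,\alpha b)\not\subseteq R(a,b)$) builds on. Both proofs correctly isolate where associativity enters -- yours at $\beta(\alpha a)=(\beta\alpha)a$, the paper's at $\beta(\alpha(a,b))=(\beta\alpha)(a,b)$ -- and both implicitly rely on finiteness (you for the zero-divisor dichotomy, the paper for ``left invertible implies unit'' and the cardinality step).
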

\begin{proof}
 Assume that $R$ is associative.  Then, by Theorem \ref{thm:4}, if $(\alpha a,\alpha b)$ is non-unimodular, then $\alpha\in R\setminus R^*$.  If $R(\alpha a,\alpha b)$ is free, then $R(\alpha a,\alpha b)=R(a,b)$.  Thus there exists $\beta\in R$ such that
\begin{equation*}
 (\beta\alpha a,\beta\alpha b)= (a,b),
\end{equation*}
under the assumption that $R$ is associative, this requires that $\beta\alpha=1$, contradicting that $\alpha\in R\setminus R^*$.

Hence if  there exists $\alpha$ such that $(\alpha a,\alpha b)$ is a non-unimodular vector  and $R(\alpha a,\alpha b)$ is a free cyclic submodule, then $R$ is non-associative.
\end{proof}

Examples have been calculated of non-associative rings of order $8$, where $(a,b)$ is unimodular, $R(\alpha a,\alpha b)$ is free and $R(\alpha a,\alpha b)\not\subseteq R(a,b)$.

\section{Conclusion and Further Directions}
For the existence of FCSs of $^2\!R$ that are generated by non-unimodular vectors  (``non-unimodular FCSs"), $R$ must have at least two maximal right ideals, at least one of which is non-principal.  This is a necessary condition.  Calculated examples \cite{San11} show that this condition is not sufficient; other properties of a ring are required to guarantee the presence of FCSs generated by non-unimodular vectors.

As already mentioned in the introduction, in our worked examples \cite{San11} non-unimodular FCSs have been only found for non-commutative rings. One would be tempted to conjecture that non-commutativity is essential in this respect. Yet, this is questionable because some rings feature non-unimodular FCSs in $^2\!R$, but not in $R^{2}$ (and {\it vice versa}). Hence, it is highly desirable to clarify to what extent the existence of non-unimodular FCSs depends on the non-commutativity of the ring; in particular, what is the smallest commutative ring featuring non-unimodular FCSs?

Further, in all analysed examples, a non-unimodular FCS was found to share with {\it any other} FCS at least one vector apart from $(0,0)$; is this true in general, or just a feature of the particular small-order rings? Finally, within our bank of examples, we found rings where all outliers generate FCSs (like the smallest ring of ternions \cite{HS09}), as well as rings where only some outliers have this property. What distinguishes the two kinds of rings? These are exciting open questions we would like to focus on in the near future.

\section*{Acknowledgement}
The authors wish to thank Andrea Blunk and Asha Rao for comments on early drafts.
JLH gratefully acknowledges the support from the National Scholarship Programme of the Slovak Republic. This work was also partially supported by the VEGA grant agency projects 2/0092/09 and 2/0098/10.

\vspace*{-.1cm}


\begin{thebibliography}{10}
\itemsep=-2pt

\bibitem{HL09}
Honold T., and Landjev I., Linear codes over finite chain rings and projective Hjelmslev geometries, in Codes Over Rings: Series on Coding Theory and Cryptology 6, Editor P. Sol\' e, World Scientific, Hackensack NJ, 2009, 60--123.

\bibitem{PSK06}
Planat M., Saniga M., and Kibler M. R., Quantum entanglement and projective  ring
     geometry, {\it Symmetry, Integrability and Geometry: Methods and Applications}  {\bf 2} (2006), 066 (arXiv:quant-ph/0605239).

\bibitem{HS08}
Havlicek H., and Saniga M., Projective ring line of an arbitrary single qudit, {\it Journal of
     Physics A: Mathematical and Theoretical} {\bf 41} (2008), 015302 (arXiv:0710.0941).

\bibitem{SPP08}
Saniga M., Planat M., Pracna P., Projective ring line encompassing two-qubits,
     {\it Theoretical and Mathematical Physics} {\bf  155} (2008), 463--473 (arXiv:quant-ph/0611063).

\bibitem{veld95}
Veldkamp F.D., Geometry over rings, in Handbook of Incidence Geometry, Editor F. Buekenhout, Amsterdam, Elsevier, 1995, 1033--1084.

\bibitem{BGS95}
Brehm U., Greferath M., and Schmidt S. E., Projective geometry on modular lattices, in Handbook of Incidence Geometry, Editor F. Buekenhout, Amsterdam, Elsevier, 1995, 1115--1142.

\bibitem{Faure04}
Faure C.-A., Morphisms of projective spaces over rings, {\it Advances in Geometry} {\bf 4} (2004), 19--31.

\bibitem{San11}
Saniga M., Projective lines over finite rings, an invited Kempner colloquium given at the Department of Mathematics, University of Colorado, Boulder, Colorado (U. S. A.), on April 28, 2011;
available on-line at {\tt http://www.ta3.sk/$\widetilde{~}$msaniga/pub/ftp/boulder.pdf}.

\bibitem{HS09}
Havlicek H., and Saniga M., Vectors, cyclic submodules and projective spaces linked with ternions, {\it Journal of Geometry} {\bf 92} (2009), 79--90 (arXiv:0806.3153).

\bibitem{SP10}
Saniga M., and Pracna P., Space versus time: unimodular versus non-unimodular projective ring geometries?, {\it Journal of Cosmology} {\bf 4} (2010), 719--735 (arXiv:0808.0402).

\bibitem{Rag69}
Raghavendran R., Finite associative rings, {\it Compositio Mathematica} {\bf 21} (1969), 195--229.

\bibitem{Lam01}
Lam T.-Y., A First Course in Noncommutative Rings, {\it Graduate Texts in Mathematics} {\bf 131}, Springer, New York, 2001.

\bibitem{HP73}
Hughes D. R.,  and Piper F. C., Projective Planes, Springer, New York, 1973.

\bibitem{BH00}
Blunk A., and Havlicek H., Projective representations I. Projective lines over rings, {\it Abh. Math. Sem. Univ. Hamburg} {\bf 70} (2000), 287--299.

\end{thebibliography}
\end{document}